\theoremstyle{definition} 
\newtheorem{theorem}{Theorem}[section]
\newtheorem{proposition}[theorem]{Proposition}
\newtheorem{corollary}[theorem]{Corollary}
\newtheorem{claim}[theorem]{Claim}
\newtheorem{definition}[theorem]{Definition}
\renewenvironment{proof}[1][\unskip]{%
\par
\noindent
\textbf{Proof #1.}
\noindent}
{\hfill$\blacksquare$

\bigskip}
\def\[#1\]{\begin{align*}#1\end{align*}}
\def\land{\wedge}       
\def\lor{\vee}
\def\lnot{\neg}
\let\models=\vDash
\def\defeq{\overset{\text{def}}{=}}
\DeclareMathOperator{\ran}{ran}
\def\gA{\mathfrak{A}}
\def\gB{\mathfrak{B}}
\newcommand\RedeclareMathOperator{%
  \@ifstar{\def\rmo@s{m}\rmo@redeclare}{\def\rmo@s{o}\rmo@redeclare}%
}
\newcommand\rmo@redeclare[2]{%
  \begingroup \escapechar\m@ne\xdef\@gtempa{{\string#1}}\endgroup
  \expandafter\@ifundefined\@gtempa
     {\@latex@error{\noexpand#1undefined}\@ehc}%
     \relax
  \expandafter\rmo@declmathop\rmo@s{#1}{#2}}
\newcommand\rmo@declmathop[3]{%
  \DeclareRobustCommand{#2}{\qopname\newmcodes@#1{#3}}%
}
\DeclareMathOperator{\Sb}{Sb}
\DeclareMathOperator{\Rd}{Rd}
\DeclareMathOperator{\Eq}{Eq}
\DeclareMathOperator{\C}{C}
\RedeclareMathOperator{\S}{S}
\RedeclareMathOperator{\P}{P}
\def\RA{\mathsf{RA}}
\def\FPA{\mathsf{FPA}}
\def\QPA{\mathsf{QPA}}
\def\c{\mathsf{c}}
\def\s{\mathsf{s}}
\def\p{\mathsf{p}}
\def\ACSA{\qquad\Leftrightarrow\qquad}
\begin{document}

\title[Substitutions of variables are finitely axiomatizable]{Substitutions of variables are finitely axiomatizable over quantifications and permutations}

\author[Andr\'eka]{Hajnal Andr\'eka}

\address{%
Rényi Institute of Mathematics \\
Re\'altanoda utca 13-15 \\
1053 Budapest \\
Hungary
}

\email{andreka.hajnal@renyi.hu}


\author[Gyenis]{Zal\'an Gyenis}
\address{
Jagiellonian University\\
Grodzka 52 \\
	31-007 Krak\'ow \\
	Poland}

\email{zalan.gyenis@uj.edu.pl}

\author[N\'emeti]{Istv\'an N\'emeti}
\address{%
Rényi Institute of Mathematics \\
Re\'altanoda utca 13-15 \\
1053 Budapest \\
Hungary
}
\email{nemeti.istvan@renyi.hu}


\subjclass{Primary 03G15; Secondary 03B10}

\keywords{Representable polyadic algebra, finite axiomatization, substitution of variables, finite-variable logic}

\date{\today}
\dedicatory{Dedicated to the 60th birthday of Jean-Yves B\'eziau}

\begin{abstract}
This paper proves that the equational theory of the class $\RA_{\alpha}^{csp}$ of representable polyadic algebras 
is finitely axiomatizable over its substitution-free reduct $\RA_{\alpha}^{cp}$, for finite $\alpha$. That is, substitutions of variables in finite variable first-order logic can be described by finitely many axioms over the Boolean operations, existential quantifiers and permutations of variables.
\end{abstract}

\maketitle

\section{Mind your operations}

A significant line of research in algebraic logic concerns the (non-)finite axiomatizability of certain logical operations—more precisely, of classes of representable algebras—over others. Rather than attempting a comprehensive survey of known results (for which see the summaries in \cite{AAPALII97ec} and \cite{ANTnew}), we briefly highlight a few illustrative examples. We assume that the dimension is finite and $\ge 3$.
\begin{itemize}
	\item \cite{M69} and \cite{J69}: two classic results in the subject are that 
	the classes of representable cylindric and representable polyadic equality algebras are not finitely axiomatizable.
	
	\item \cite{ANTnew}: 
	The class of representable polyadic equality algebras 
	cannot be axiomatized by adding finitely many 
	equations to the equational theory of representable cylindric algebras of
	dimension $\alpha$. That is, the operations $\p_{ij}$ are not finitely 
	axiomatizable over $\cap, -, \c_i, \mathsf{d}_{ij}$. 

	\item \cite{SN96}: Finite axiomatization of substitutions 
	$\s_{ij}$ over the Boolean operations.
	
	\item \cite{M69}: The Boolean operations together with
	 cylindrifications and substitutions are not finitely axiomatizable, 
	 this proves nonfinite axiomatizability of cylindrifications 
	 over the Booleans and substitutions.	 
\end{itemize}

Additional results and open problems are summarized in  
\cite[Figure 1]{AAPALII97ec}. The open cases typically involve substitution operations in the presence of cylindrifications and absence of the diagonals. Substitution operations are important ones and well investigated, see, e.g.,  Pinter \cite{P1}, \cite{P2}.

One of open questions posed in \cite{AAPALII97ec} concerns the finite axiomatizability of substitutions over the Boolean operations together with cylindrifications, and permutations. In this paper, we solve this problem by showing that such an axiomatization is indeed possible (in finite dimension).
We show that the usual polyadic equations provide an explicit finite list of axioms, thereby settling the question in \cite{AAPALII97ec}. 
In the algebraic parlance, our result is that the class of representable finitary polyadic algebras is finitely axiomatizable over its substitution-free reduct. 
This will be a consequence of a theorem, stating that a  polyadic algebra of finite dimension is representable as soon as its nonpermutational-substitutions free reduct is representable. This shows that the nonpermutational substitution operations do not contribute to Johnson's result \cite{J69}  that the class of the representable polyadic algebras is nonfinite axiomatizable.  
Another consequence is that cylindrifications are not finitely axiomatizable over the Booleans and the permutations. This was indicated as an open problem in \cite[Figure 1]{AAPALII97ec}, but it got solved in the meantime by J. D. Monk \cite{M99}.\\

Let us recall the relevant definitions and background.\\

For an arbitrary ordinal%
\footnote{Throughout we use that an ordinal is the set of all smaller ordinals.} $\alpha$, a quasi-polyadic algebra $\QPA_{\alpha}$
of dimension $\alpha$ is an algebra of the form
\[
	\<A, \cdot, -, \c_{\Gamma}, \s_{\tau}:\; \tau\in{}^{\alpha}\alpha
	\text{ is a finite transformation}, \Gamma\text{ is a finite subset of }\alpha\>
\]
satisfying certain axioms that we do not recall here, but only refer to \cite[Def.5.4.1]{HenkinMonkTarski1971}. The set of equations true in $\QPA_{\alpha}$ is not closed under permutations of the set of indices of the operations ($\QPA_{\alpha}$ is not Monk-type schema axiomatizable). However, there
exists an alternative definition containing schemas of equations closed under
permutations of the indices of the operations. To this end, one had to alter the similarity type. The so obtained algebras are called \emph{finitary polyadic algebras}, $\FPA_{\alpha}$, and by \cite[Theorem 1]{sain-thompson1991}, $\FPA_{\alpha}$ and $\QPA_{\alpha}$ are term-definitionally equivalent (for $\alpha>2$). 
In this paper we will use quasi-polyadic algebras in the form of having similarity type $\cdot, -, \c_i, \s_{ij}, \p_{ij}$ for $i,j<\alpha$, that is, in place of the traditional $\QPA$ we will use its term-definitional equivalent form $\FPA$ in \cite{sain-thompson1991}. We recall the definition below.

\begin{definition}
	By a \emph{finitary polyadic algebra} of dimension $\alpha$,
	$\FPA_{\alpha}$, we mean an algebra 
	$\gA = \<A, \cdot, -, \c_i, \s_{ij}, \p_{ij}\>_{i,j<\alpha}$ in
	which equations (F0-F9) below are valid for every $i, j, k < \alpha$.
	\begin{itemize}
		\item[(F0)] $\<A, \cdot, 0\>$ is a Boolean algebra\footnote{The derived Boolean operations $+$, $0$ etc. are taken as defined in the usual way.}, $\s_{ii}=\p_{ii}=id$, $\p_{ij}=\p_{ji}$.
		\item[(F1)] $x\leq \c_ix$
		\item[(F2)] $\c_i(x+y)=\c_ix+\c_iy$
		\item[(F3)] $\s_{ij}\c_ix = \c_ix$
		\item[(F4)] $\c_i\s_{ij}x = \s_{ij}x$ if $i\neq j$
		\item[(F5)] $\s_{ij}\c_kx = \c_k\s_{ij}x$ if $k\notin \{i,j\}$
		\item[(F6)] $\s_{ij}$ and $\p_{ij}$ are Boolean endomorphisms
		\item[(F7)] $\p_{ij}\p_{ij}x=x$
		\item[(F8)] $\p_{ij}\p_{ik}x=\p_{jk}\p_{ij}x$ if $i,j,k$ are all distinct
		\item[(F9)] $\p_{ij}\s_{ji}x=\s_{ij}x$
	\end{itemize}
\end{definition}

The set of all $U$-termed $\alpha$-sequences is denoted by ${}^\alpha U$, thus ${}^\alpha U$ is the set of all functions from $\alpha$ to $U$. For a sequence $s\in {}^{\alpha}U$, $i <\alpha$
and $u\in U$ we denote by $s(i/u)$ the sequence we obtain from $s$ by replacing its $i$th value with $u$. An $\alpha$-ary relation over $U$ is a subset $R\subseteq {}^{\alpha}U$.  Define the following unary operations
on $\alpha$-ary relations over $U$:
\begin{align*}
	\C_i^U(R) &= \{ s\in {}^{\alpha}U:\; s(i/u)\in R\text{ for some } u\in U\},\\
		\S_{ij}^U(R) &= \{ s\in {}^{\alpha}U :\; s(i/s_j)\in R \},\\
	\P_{ij}(R) &= \big\{ s :\; s(i/s_j)(j/s_i)\in R	\big\}.
\end{align*}
Note that $\P_{ij}(R)$ does not depend on $U$, while $\C_i^U(R)$ and $\S_{ij}^U(R)$ do depend on $U$. We often omit the superscript $U$ even in $\C_i$ and $\S_{ij}$ when this is not likely to lead to confusion. The set of all subsets of a set $V$ is denoted by $\Sb(V)$, so $\Sb({}^\alpha U)$ is the set of all $\alpha$-ary relations on $U$. The $\alpha$-dimensional
full finitary polyadic set algebra is the structure
\[
	\gA = \< \Sb({}^{\alpha}U), \cap, -, \C_i^U, \S_{ij}^U, \P_{ij}\>_{i,j<\alpha}\,.
\]
It is routine to check that full finitary polyadic set algebras satisfy (F0-F9), so they belong to the class $\FPA_{\alpha}$. Subalgebras of full algebras are called set algebras, and subalgebras isomorphic to products of full finitary polyadic set algebras
are called representable $\FPA_{\alpha}$'s. It was shown in \cite[Theorem 2(i)]{sain-thompson1991} that for $\alpha>2$ the class of representable $\FPA_{\alpha}$'s (or equivalently, $\QPA_{\alpha}$) 
is not definable by finitely many schemata of equations. 

For a class $K$ of algebras, $\mathbf{I}K,\mathbf{S}K,\mathbf{P}K$ denote the classes of all isomorphic copies, all subalgebras, and all products of elements of $K$, respectively. 
We will focus on the following classes of algebras
\begin{align*}
	\RA_{\alpha}^{csp} &= \mathbf{SI}\big\{ \< \Sb({}^{\alpha}U), \cap, -, \C_i^U, \S_{ij}^U, \P_{ij}^U\>_{i,j<\alpha}:\; U\text{ is a set} \big\}, \\
	\RA_{\alpha}^{cp} &= \mathbf{SI}\big\{ \< \Sb({}^{\alpha}U), \cap, -, \C_i^U, \P_{ij}^U\>_{i,j<\alpha}:\; U\text{ is a set} \big\}, \\
	\RA_{\alpha}^{p} &= \mathbf{SI}\big\{ \< \Sb({}^{\alpha}U), \cap, -, \P_{ij}^U\>_{i,j<\alpha}:\; U\text{ is a set} \big\}. 
\end{align*}
The superscripts \emph{csp}, \emph{cp} and \emph{p} refer to the similarity types of the algebras: 
\begin{align*}
	p &= \text{Boolean} + \{\p_{ij}:\; i,j<\alpha\}, \\ 
	cp &= \text{Boolean} + \{\c_i, \p_{ij}:\; i,j<\alpha\}, \\
	csp &= \text{Boolean} + \{\c_i, \s_{ij}, \p_{ij}:\; i,j<\alpha\}.	
\end{align*}
If $\gA$ is a $csp$-type algebra, then $\Rd_{cp}\gA$ denotes the $cp$-type reduct
of $\gA$, etc.

The goal of this paper is to prove that $\RA_{\alpha}^{csp}$ is finitely axiomatizable over $\RA_{\alpha}^{cp}$. 
What does finite axiomatizability of a class of algebras over another one mean?
Let $K$ and $L$ be classes of algebras such that the similarity type
of $K$ is contained in that of $L$. Let $\Eq(K)$ and $\Eq(L)$ denote the set of all
equations valid in $K$ and $L$ respectively. We say that 
\emph{$L$ is finitely axiomatizable over $K$} if $\Eq(K)\cup \Sigma$ is an
equational axiomatization of $\Eq(L)$ for some finite $\Sigma$. 
In our case $K = \RA_{\alpha}^{cp}$ consists of all appropriate subreducts
of elements of $L=\RA_{\alpha}^{csp}$, and thus finite axiomatizability of 
$\RA_{\alpha}^{csp}$ over $\RA_{\alpha}^{cp}$ means that 
$\Eq(\RA_{\alpha}^{csp})$ has an axiomatization in which the operation symbols not present in the type $cp$ occur finitely many times. 

We will derive the above theorem as corollary to our result (Theorem \ref{thm:1a}) stating that a finitary polyadic equality algebra of finite dimension is representable iff its substitutions-free reduct is representable.

\section{Finite axiomatizability}

The class of finite-dimensional representable polyadic algebras is not finitely axiomatizable, see \cite{J69}. The following theorem says that the non-permutational substitution operations do not contribute to this non-finite axiomatizability result.

\begin{theorem}\label{thm:1a}
		A finitary polyadic algebra of finite dimension is representable iff its substitution-free reduct is representable.
\end{theorem}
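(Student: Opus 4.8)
The forward direction is immediate: the $cp$-reduct of any representation of $\gA$ is a representation of $\Rd_{cp}\gA$. So assume $\Rd_{cp}\gA$ is representable, say there is a $cp$-embedding $\Rd_{cp}\gA\hookrightarrow\prod_{k}\<\Sb({}^\alpha U_k),\cap,-,\C_i^{U_k},\P_{ij}^{U_k}\>_{i,j<\alpha}$ with coordinate maps $h_k$. A given $cp$-representation need not respect the substitutions $\s_{ij}$, so the task is to re-choose the representation so that it does.

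My plan is to reduce to atomic complete algebras and then to pin $\s_{ij}$ down on atoms. First pass to the canonical (perfect) extension $\gA^+$: each of (F0)--(F9) is an equation between additive operators of Sahlqvist shape, hence canonical, so $\gA^+$ is again an $\FPA$ with $\gA\hookrightarrow\gA^+$, and $\Rd_{cp}(\gA^+)=(\Rd_{cp}\gA)^+$ remains representable since $\RA_\alpha^{cp}$ is closed under canonical extensions (the analogue of the canonicity of $\mathsf{RCA}_\alpha$); so we may assume $\gA$ is complete and atomic. The conceptual core is then the claim that, for every atom $b$ of $\gA$ and all $i\ne j$, one has $\s_{ij}b=\c_i b$ if $\p_{ij}b=b$ and $\s_{ij}b=0$ if $\p_{ij}b\ne b$ --- heuristically, $\p_{ij}b=b$ marks $b$ as lying on the $ij$-diagonal, where $\S_{ij}^U$ agrees with $\C_i^U$, while $\S_{ij}^U$ annihilates every off-diagonal atom. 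Granting this, complete additivity of the Boolean endomorphism $\s_{ij}$ gives $\s_{ij}x=\sum\{\c_i b:\; b\le x,\ b\ \text{an atom},\ \p_{ij}b=b\}$ for all $x$, so $\s_{ij}$ is recovered entirely from $\c_i$ and $\p_{ij}$.

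To conclude, one wants a $cp$-representation of $\gA$ that is complete and atomic --- carrying each atom $b$ to a single tuple $t_b^k\in{}^\alpha U_k$ on coordinate $k$. Any such representation already does the job: there $\p_{ij}b=b\iff(t_b^k)_i=(t_b^k)_j$, so by the formula above $h_k(\s_{ij}b)=\C_i^{U_k}\{t_b^k\}=\S_{ij}^{U_k}(h_k(b))$ when $\p_{ij}b=b$ and $h_k(\s_{ij}b)=\emptyset=\S_{ij}^{U_k}(h_k(b))$ otherwise; by complete additivity $h_k(\s_{ij}x)=\S_{ij}^{U_k}(h_k(x))$ for all $x$, so $h$ is an $\FPA$-embedding of $\gA$ into $\prod_k\<\Sb({}^\alpha U_k),\cap,-,\C_i^{U_k},\S_{ij}^{U_k},\P_{ij}^{U_k}\>$, which is the required representation.

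The main obstacle is precisely the two ingredients just invoked. For the atomic formula, $\s_{ij}b\le\c_i b$ is immediate from (F1) and (F3), but proving that $\s_{ij}b\ne 0$ \emph{exactly} when $\p_{ij}b=b$ cannot rest on the polyadic axioms alone --- those govern $\s_{ij}$ only weakly (there is not even an axiom for $\s_{ij}\c_j$) --- so the argument must feed the representability of $\Rd_{cp}\gA$ back in, essentially showing inside a $cp$-set algebra that the value of $\s_{ij}b$ is forced by $\c_i$ and $\p_{ij}$. The second ingredient is the same difficulty in another guise: an arbitrary $cp$-representation is neither complete nor does it send $\p_{ij}$-fixed atoms into the $ij$-diagonal, and the natural route to one that does is a step-by-step construction --- building the representation as a limit of finite approximations, meeting the $cp$-requirements (possible because $\Rd_{cp}\gA$ is representable) while also maintaining ``diagonal-compatibility'' and ``$\s_{ij}$-coherence'', which constrain only how labels on the tuples already present relate to one another and so never demand fresh base points; that these constraints are jointly satisfiable with the $cp$-requirements is exactly what (F3)--(F5), (F6), (F9) have to supply. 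Once Theorem~\ref{thm:1a} is in place, finite axiomatizability of $\RA_\alpha^{csp}$ over $\RA_\alpha^{cp}$ follows by taking $\Sigma$ to be the finitely many polyadic equations mentioning the operations $\s_{ij}$.
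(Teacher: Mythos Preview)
Your outline is a strategy, not a proof, and the two load-bearing steps are exactly the ones you flag as ``obstacles'' --- neither is carried out, and at least one of them looks genuinely problematic.

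First, the atom formula. You want, for every atom $b$ of the (complete, atomic) algebra, $\s_{ij}b=\c_i b$ when $\p_{ij}b=b$ and $\s_{ij}b=0$ otherwise. The inequality $\s_{ij}b\le\c_ib$ is fine (from (F1), (F3), (F6)), but the remaining half is not a consequence of (F0)--(F9), and you say as much. Your remedy is to ``feed the representability of $\Rd_{cp}\gA$ back in'', yet a $cp$-representation carries no information whatsoever about the abstract $\s_{ij}$'s of $\gA$: it only sees $\c_i$ and $\p_{ij}$. Knowing that $h(b)$ is $\P_{ij}$-invariant does not force $\s_{ij}b$ to equal $\c_ib$ in $\gA$; that is a statement about the algebra, not about the representation. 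So the claimed reduction is circular unless you already have the $csp$-representation you are trying to build. Likewise, the canonicity of $\RA_\alpha^{cp}$ that you invoke (so that $\gA^+$ still has a representable $cp$-reduct) is a nontrivial theorem in its own right, not a one-line analogue of the $\mathsf{RCA}_\alpha$ case, and you give no argument for it. Finally, even granting both, you still need a \emph{complete} $cp$-representation (atoms to singletons); mere representability of the $cp$-reduct does not supply one, and your ``step-by-step'' paragraph is a wish list of constraints, not a construction.

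The paper takes an entirely different, and much more concrete, route that sidesteps all of this. It never passes to canonical extensions or argues about atoms. Starting from a single $cp$-set-representation on a base $U$, it first reduces to the set-algebra case by pushing the abstract $\s_{ij}$'s through the coordinate projections (using only that $g(x)=0\Rightarrow g(\s_{ij}x)=0$, which follows from (F3) and (F6)). Then it \emph{blows up} the base to $V=U\times H$ with $|H|>\alpha$, represents $\Rd_{cp}\gA$ on the repetition-free part $W\subseteq{}^\alpha V$ via $g(x)=\{s\in W:\hat s\in f(x)\}$, and finally extends to all of ${}^\alpha V$ by
\[
h(x)=\{\,z\circ\sigma:\ z\in g(\s_\sigma x),\ \sigma\in T\,\},
\]
letting the \emph{abstract} substitutions $\s_\sigma$ themselves dictate which repetitive sequences belong to $h(x)$. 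The polyadic identities (S1)--(S6) (derivable from (F0)--(F9)) are then checked one by one to show $h$ respects $-,\cdot,\c_i,\s_{ij},\p_{ij}$. No canonicity, no atomic decomposition, no step-by-step game --- just a direct change of base. If you want to salvage your approach, the missing piece is a genuine proof that (F0)--(F9) together with the equational theory of $\RA_\alpha^{cp}$ pin down $\s_{ij}$ as a \emph{term} (in the completion) in $\c_i,\p_{ij}$; short of that, the paper's construction is the argument.
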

\begin{proof} Let $\alpha$ be a finite ordinal. The substitution-free reduct of a representable $\FPA_\alpha$ is representable, since if $\gA\subseteq \Pi_{i\in I}\gB_i$ then  $\Rd_{cp}\gA\subseteq \Pi_{i\in I}\Rd_{cp}\gB_i$. 
	
In the other direction, first we show that it is enough to concentrate on set algebras, i.e., it is enough to prove that if the reduct of an $\FPA_{\alpha}$ is isomorphic to a set algebra, then the algebra itself is isomorphic to a set algebra. Indeed, assume that we have already proved that
\begin{align}
	\mbox{if $\Rd_{cp}\gA\in\RA^{cp}$ and $\gA\in\FPA_\alpha$, then $\gA\in\RA^{csp}$.}
	\label{*}
\end{align}	
Take an arbitrary $csp$-type algebra 
$\gA  = \<A, \cdot, -, \c_i, \s_{ij}, \p_{ij}\>_{i,j<\alpha}$. Assume
that $\gA\models $(F0-F9), and $\Rd_{cp}\gA$ is representable, say 
\[
f: \<A, \cdot, -, \c_i, \p_{ij}\>_{i,j<\alpha} \quad\rightarrowtail\quad
\Pi_{k\in K}\<\Sb({}^{\alpha}U_k), \cap, -, \C_i, \P_{ij}\>_{i,j<\alpha}
\]	
holds for some $f$ and $U_k$, $k\in K$. 
Let $k\in K$, let $\pi_k$ be the projection function, and let $g:\Rd_{cp}\gA\rightarrow \<\Sb({}^{\alpha}U_k), \cap, -, \C_i, \P_{ij}\>_{i,j<\alpha}$ be the composition of $f$ with $\pi_k$. Let $\gB_k$ be the $g$-image of $\gA$ and let $i,j<\alpha$. We show that (F0-F9) imply that the $g$-image of the function $\s_{ij}$ on $A$ is a function on $B_k$.  To this end, we have to show that if $g(a)=g(b)$ then $g(\s_{ij}a)=g(\s_{ij}b)$,  for any $a,b\in A$. Note that $g$ is a Boolean homomorphism that preserves also the cylindrifications $\c_i$. Now, $g(a)=g(b)$ iff,  by $g$ being a Boolean homomorphism, $g(a\oplus b)=0$ where $\oplus$ denotes the Boolean symmetric difference. Similarly, $g(\s_{ij}a)=g(\s_{ij}b)$ iff $g(s_{ij}a\oplus \s_{ij}b) = g(\s_{ij}(a\oplus b))=0$, by (F6). Therefore, it is enough to show that $g(\s_{ij}x)=0$ whenever $g(x)=0$. Indeed,
\bigskip

\noindent
$g(x)=0$\qquad implies, by $g$ being a $\c_i$-homomorphism,\\
$g(\c_ix)=\c_i0=0$,\qquad implies, by (F3),\\
$g(\s_{ij}\c_ix)=0$, \qquad implies, by $\s_{ij}x\le\s_{ij}\c_ix$ and Boolean homomorphism,\\
$g(\s_{ij}x)\le g(\s_{ij}\c_ix)=0$,\qquad so\\
$g(\s_{ij}x)=0$.
\bigskip

\noindent
We have seen that the $g$-images of the (abstract) substitution operations of $\gA$ are functions on $\gB_k$. Let $\s_{ij}^{+}$ denote these projected functions and let $\gB_k^+ = \< \gB_k, \s_{ij}^{+}\>_{i,j<\alpha}$, for all $k\in K$. Then $g:\gA\to\gB_k^+$ and so $\gB_k^+\in\FPA_\alpha$ by $\gA\in\FPA_\alpha$. Also $\Rd_{cp}\gB_k^+ = \gB_k\in\RA_{\alpha}^{cp}$ by its construction. Thus, $\gB_k^+\in\RA_{\alpha}^{csp}$, by \eqref{*} and so $\gA\in\mathbf{SIP}\RA_{\alpha}^{csp}$ since $f:\gA\rightarrowtail \Pi_{k\in K}\gB_k^+$. That is, $\gA$ is representable. We have seen that it is enough to prove \eqref{*}.
\bigskip

To prove \eqref{*}, take an arbitrary $csp$-type algebra 
$\gA  = \<A, \cdot, -, \c_i, \s_{ij}, \p_{ij}\>_{i,j<\alpha}$. Assume
that $\gA\models$ (F0-F9), and 
\[
f: \<A, \cdot, -, \c_i, \p_{ij}\>_{i,j<\alpha} \quad\rightarrowtail\quad
\<\Sb({}^{\alpha}U), \cap, -, \C_i, \P_{ij}\>_{i,j<\alpha}
\]	
for some $f$ and $U$.
	
	First we show that the substitution-free reduct 
	has a representation on the repetition-free sequences in some
	${}^{\alpha}V$, and then we will use the (abstract) $\s_{ij}$ operations of $\gA$ to 
	tell which elements of $A$ the sequences with repetitions in
	${}^{\alpha}V$ should be mapped into.
	
	Let $H$ be a set with cardinality greater than $\alpha$, i.e., $|H|>|\alpha|$ and let $V=U\times H$.
	Let us write $W$ for the repetition-free sequences in
	${}^{\alpha}V$, that is
	\[
		W \defeq \big\{ s\in {}^{\alpha}(U\times H):\; 
			s_i\neq s_j\text{ for all } i<j<\alpha	\big\}\,.
	\]
	For $s\in {}^{\alpha}(U\times H)$ we write 
	\[
		\hat{s} = \< s_i(0):\; i<\alpha\> \in {}^{\alpha}U\,.
	\]
	For example, for $s = \< \<u,i\>, \<v,j\>, \ldots, \<w,k\>\>$ we have
	$\hat{s} = \<u, v, \ldots, w\>$. 
	For $X\subseteq W$ and $i\in\alpha$ let $\C_i^W(x)=W\cap\C_i^V(X)$ and ${}_{W}\!-(X)=W-X$. Define 
	\[
		g:\<A, \cdot, -, \c_i, \p_{ij}\>_{i,j<\alpha} \quad\rightarrowtail\quad
		\<\Sb(W), \cap, {}_{W}\!-, \C_i^W, \P_{ij}\>_{i,j<\alpha}
	\]
	by letting
	\[
		g(x)\defeq \big\{  s\in W :\; \hat{s}\in f(x)\big\}
	\]
	for all $x\in A$.
	\begin{claim}\label{claim:1}
		$g$ is a homomorphism.
	\end{claim}
	\begin{proof}[of Claim \ref{claim:1}] Let $s\in W$ be arbitrary.
		We check first the Boolean operation $g(x\cdot y) = g(x)\cap g(y)$: 
		\begin{align*}
			s \in g(x\cdot y) 
			&\ACSA \hat{s}\in f(x\cdot y)  \tag{by def. of $g$} \\ 
			&\ACSA \hat{s}\in f(x)\cap f(y) \tag{since $f$ is a homomorphism}\\
			&\ACSA \hat{s}\in f(x)\text{ and }\hat{s}\in f(y) \tag{by def. of $\cap$}\\
			&\ACSA s \in g(x)\cap g(y) \tag{by def. of $g$ and $\cap$}
		\end{align*}
		As for the complementation $g(-x)={}_{W}\!-g(x)$, we have 
		\begin{align*}
			s \in g(-x) 
			&\ACSA \hat{s} \in f(-x) \tag{by def. of $g$}\\
			&\ACSA \hat{s} \notin f(x)  \tag{since $f$ is a homomorphism}\\
			&\ACSA s\notin g(x) \tag{by def. of $g$} \\
			&\ACSA s \in W-g(x) = {}_{W}\!-g(x) \tag{by def of ${}_{W}\!-$}
		\end{align*}
		Next, we show $g(\c_kx) = \C_k^Wg(x)$ for $k<\alpha$. Let $a\in H - \{ s_i(1) : i<\alpha\}$. There is such an $a$ by $|H|>|\alpha|$. Note that if $s_i=\langle u,b\rangle$ then $s_i(0)=u$ and $s_i(1)=b$.
		\begin{align*}	
			s \in g(\c_kx) 
			&\ACSA  \hat{s}\in f(\c_kx)  \tag{by def. of $g$}\\
			&\ACSA \hat{s}\in \C_k^Uf(x) \tag{since $f$ is a homomorphism}\\
			&\ACSA (\exists u\in U)\  \hat{s}(k/u)\in f(x)  \tag{by def.\ of $\C_k^U$}\\
			&\ACSA (\exists u\in U)\  s(k/\<u,a\>)\in g(x) \tag{by def.\ of \;$\hat{}$, $g$ and $a$} \\
			&\ACSA s \in \C_k^{W}g(x) \tag{by def. of $\C_k^W$}
		\end{align*}
		Finally, for $g(\p_{ij}x) = \P_{ij}g(x)$ we have
		\begin{align*}
			s\in g(\p_{ij}x) 
			&\ACSA \hat{s} \in f(\p_{ij}x) \tag{by def. of $g$}\\
			&\ACSA \hat{s}(i/\hat{s}_j)(j/\hat{s}_i) \in f(x) \tag{since $f$ is a homomorphims}\\
			&\ACSA s(i/{s}_j)(j/{s}_i)\in g(x) \tag{by def. of \;$\hat{}$ and $g$}\\
			&\ACSA s \in \P_{ij}g(x) \tag{by def. of $\P_{ij}$}
		\end{align*}
	\end{proof}
	
		By a finite transformation on $\alpha$ we mean a function $\sigma:\alpha\to\alpha$ which moves only finitely many elements of $\alpha$. Let $T$, $P$ and $S$ denote the sets of all finite transformations on $\alpha$, 
	all finite permutations of $\alpha$, and all non-permutational finite transformations on
	$\alpha$, respectively, i.e.
	\[
	T = \{\sigma\in{}^{\alpha}\alpha :  |\{ i\in\alpha : \sigma(i)\ne i\}|<\omega\}, 
	P = \{f\in T:\; \ran(f)=\alpha\},
	S = T-P.
	\]
	The (semigroup of) finite transformations can be generated by the operations
	$[i,j]$ and $[i/j]$ for $i,j<\alpha$, where $[i,j]$ denotes the function that exchanges $i$ and $j$, and leaves every other element of $\alpha$ fixed; while $[i/j]$ is the function that maps $i$ onto $j$, and leaves other elements of $\alpha$ fixed. $[i,j]$ is called the \emph{transposition} of $i$ and $j$ and $[i/j]$ is called the \emph{replacement} of $i$ by $j$. It is known that each $\sigma\in P$ can be written as a composition of transpositions and each $\sigma\in S$ can be written as a composition of replacements. Let $\circ$ denote usual function-composition, i.e.,
	\[ (\sigma\circ\tau)(i) = \sigma(\tau(i))\qquad\mbox{for all }i<\alpha.\] 
It is proved in \cite[Theorem 1(i)]{sain-thompson1991} that by the use of (F0-F9),  one can define 
\begin{align}
	\s_{\sigma}(x) = \s_{i_1,j_1}\cdots\s_{i_n,j_n}(x)\quad
		&\text{ where }\quad \sigma = [i_1/ j_1]\circ\cdots\circ [i_n/ j_n],\\
	\s_{\sigma}(x) = \p_{i_1,j_1}\cdots\p_{i_n,j_n}(x)\quad
	&\text{ where }\quad \sigma = [i_1,j_1]\circ\cdots\circ [i_n,j_n].
\end{align}
For these defined terms, the following usual polyadic equations (S1-S6) are true in  $\FPA_\alpha$ (for proof see the proof of \cite[Theorem 1(i)]{sain-thompson1991}):
\bigskip

\begin{itemize}
	\item[(S1)] $\s_{\sigma}x = \c_k\s_{\sigma}x$ if $k\notin\ran(\sigma)$.
	\item[(S2)] $\s_{\sigma}\c_ix = \s_{\delta}\c_ix$ if $\sigma$ and $\delta$ differ only at $i$.
	\item[(S3)] $\s_{\sigma}\c_ix = \c_k\s_{\sigma}x$ if 
	$\{j:\; \sigma(j)=k\} = \{i\}$.
	\item[(S4)] $\s_{\sigma}$ is a Boolean homomorphism.
	\item[(S5)] $\s_{\sigma}\s_{\eta}x = \s_{\sigma\circ\eta}x$.
	\item[(S6)] $\s_{ij}x = \c_i\s_{ij}x$.			
\end{itemize}
\bigskip

\noindent Finally, we define $\S_{\sigma}^U:\Sb({}^{\alpha}U)\to \Sb({}^{\alpha}U)$ by
\[ \S_{\sigma}^U(X) = \{ s\in{}^{\alpha}U : s\circ\sigma\in X\}\quad  \text{ for } X\subseteq{}^{\alpha}U.\]
In the above, $s\circ\sigma$ is the sequence $s$ rearranged along $\sigma$, i.e., 
\[ s\circ\sigma = \langle s_{\sigma0}, s_{\sigma1},\dots \rangle.\]
Then  $\S_{ij}^U=\S_{[i/j]}^U$ and $\P_{ij}^U=\S_{[i,j]}^U$.	
\bigskip	
	
We are ready to define our function $h:A\to\Sb({}^\alpha V)$; the  already defined function $g:A\to\Sb(W)$ guides us in this, as follows. We want $g(x)\subseteq h(x)$ and  we want $h$ to be a homomorphism also w.r.t.\ the substitution functions $\s_{ij}$. Assume $z\in g(w)$ and $w\le \s_{ij}y$. Then we need $z\in g(w)\subseteq \S_{ij}h(y)$, so $z(i/z_j)\in h(y)$ by the definition of $\S_{ij}$. If also $y\le\s_{kl}x$, then we need $z(i/z_j)(k/z_l)\in h(x)$. This leads to the following definition.
	\[
		h(x) = g(x)\cup\big\{
			z\circ\sigma:\; z\in g(\s_{\sigma}x), \ \sigma\in S
		\big\}.
	\]
Claim \ref{claim:1} implies that
	\begin{align}
	g(\s_{\sigma}x) = \S_{\sigma}^Wg(x) \quad\text{ for every bijection } \sigma\in P.
	\label{g3}
	\end{align}
	By using \eqref{g3}, one can see that
	\begin{align}
		h(x) = \big\{ z\circ \sigma:\; z\in g(\s_{\sigma}x), \ \sigma\in T\big\} \text{ and }g(x) = h(x)\cap W.
		\label{g4}
	\end{align}
	We will use the function $h$ in the form as in \eqref{g4}, and we will also use that by Claim \ref{claim:1} we have
	\begin{align}
		g(\c_ix) = \C_i^Vg(x)\;\cap W\label{g2}
	\end{align}
	for all $x\in A$ and $i<\alpha$. In what follows we prove that
	
		\[
		h: \<A, \cdot, -, \c_i, \p_{ij}, \s_{ij}\>_{i,j<\alpha}
		 \quad\rightarrowtail\quad
		\<\Sb({}^{\alpha} V), \cap, -, \C_i^V, \P_{ij}, \S_{ij}^V\>_{i,j<\alpha}
	\]
	\\
	\noindent
	is a homomorphism.

	Homomorphism w.r.t.\ the Booleans $\cdot,-$ follows from $g$ and $\s_{\sigma}$ respecting the Booleans by Claim \ref{claim:1} and (F6); we will need also $\alpha<\omega$ in the case of complementation. 
	
	As for $\cdot$ we have
	\begin{align*}
		h(x\cdot y) &\ =\ \{ z\circ\sigma : z\in g(\s_{\sigma}(x\cdot y)), \sigma\in T\} \\
		  &\ =\ \{ z\circ\sigma :\; z\in g(\s_{\sigma}x\cdot \s_{\sigma}y), \sigma\in T\} \\
		  &\ =\ \{ z\circ\sigma :\; z\in (g(\s_{\sigma}x)\cap g(\s_{\sigma}y)), \sigma\in T\} \\
		  &\ =\ \{ z\circ\sigma :\; z\in g(\s_{\sigma}x), \sigma\in T\}\cap \{ z\circ\sigma :\; z\in g(\s_{\sigma}y), \sigma\in T\} \\
		  &\ =\ h(x)\cap h(y)\,.
	\end{align*}
	
	\noindent
	For checking complementation $-$, notice first that each sequence $s\in{}^\alpha U$ is of form $z\circ\sigma$ for some $z\in W$ and $\sigma\in T$, because $\alpha$ is finite and $|H|\ge\alpha$. I.e.,
	\begin{align}
		{}^\alpha V = \{ z\circ\sigma :\;  z\in W\mbox{ and }\sigma\in T\}.
		\label{h-}
	\end{align}

	\noindent Then we have 
	\begin{align*}
		 {}^\alpha V - h(x) &\ =\ \{ z\circ\sigma : z\circ\sigma\notin h(x),\; z\in W,\; \sigma\in T\} \\
		 &\ \subseteq\ \{ z\circ\sigma : z\notin g(\s_{\sigma}(x)),\; z\in W,\; \sigma\in T\} \\
		 &\ \subseteq\ \{ z\circ\sigma : z\in g(-\s_{\sigma}x),\; \sigma\in T\} \\
		 &\ \subseteq\ \{ z\circ\sigma : z\in g(\s_{\sigma}(-x)),\; \sigma\in T\} \\
		 &\ \subseteq\  h(-x)\,.
	\end{align*}
\noindent
For the other direction, let $s\in h(-x)$ be arbitrary. Assume that $s\in h(x)$, we derive a contradiction. Now, $s\in h(-x)\cap h(x)=h(-x\cdot x)=h(0)$, so $s=z\circ \sigma$ for some $z\in g(\s_\sigma 0)=g(0)=\emptyset$ by the definition of $h$, (S4) and $g$ being a Boolean homomorphism. Thus, $s\notin h(x)$ showing $h(-x)\subseteq{}^\alpha V-(x)$.

	In the rest of the proof, we will write $\sigma z$ in place of $z\circ\sigma$. $\C_i$ and
	$\S_{\sigma}$ without the superscripts denote $\C_i^V$ and
	$\S_{\sigma}^V$, respectively. The axioms (S1-S6) will 
	be used in the proof, they follow from (F0-F9) by term-definitional equivalence of $\FPA$ and $\QPA$, 
	see \cite[Theorem 1(i)]{sain-thompson1991}.
	Next we show that $h$ is a homomorphism w.r.t.\ the cylindrifications $\c_i$.
	\begin{claim}\label{claim:2}
		$h(\c_ix) = \C_i^Vh(x)$\quad for $i<\alpha$ and $x\in A$.
	\end{claim}
	\begin{proof}[of Claim \ref{claim:2}]
		
		First we show $h(\c_ix)\subseteq \C_ih(x)$. Assume $w\in h(\c_ix)$. 
		This means, by definition of $h$, that 
		\begin{align}
			w=\sigma z\mbox{ for 
				some }z\in g(\s_{\sigma}\c_ix), \sigma\in T.
			\label{*1}
		\end{align} 
		
		Assume first that $\sigma$ is a bijection ($\sigma\in P$). Then 
		$z\in g(\s_{\sigma}\c_ix) = \S_{\sigma}g(\c_ix)$ by \eqref{*1} and \eqref{g3}. 
		Thus $w=\sigma z\in g(\c_i x)$ by definition of $\S_{\sigma}$. Hence, 
		$w\in \C_ig(x)\subseteq \C_ih(x)$ by \eqref{g2}, \eqref{g3} and
		since $w$ is repetition-free by $w=\sigma z$, \eqref{*1}, $\ran(g)\subseteq W$ and $\sigma\in P$. We have seen that
		$w\in \C_ih(x)$.
		
		Assume next that $\sigma$ is not a bijection. Then there is
		$k\notin\ran(\sigma)$ as $\sigma$ is finite. 
		Let $\delta = \sigma(i/k)$. Then 
		$\s_{\sigma}\c_ix =\s_{\delta}\c_ix = \c_k\s_{\delta}x$ by (S2) 
		and (S3). Thus, $z\in g(\s_{\sigma}c_ix) = g(c_k\s_{\delta}x) = C_kg(\s_{\delta}x)\cap W$ by \eqref{*1} and \eqref{g3}, so $z(k/u)\in g(\s_{\delta}x)$ for some $u$.
		Now,
		\begin{align} 
			w(i/u)=\delta(z(k/u))
			\label{w}
		\end{align}
		holds because of the following. Let $j\ne i$.  Then
		\begin{align*}
			w(i/u)(j) &= w(j)  \tag{by $j\ne i$} \\
			 &= (\sigma z)j \tag{by \eqref{*1}} \\
			 &= z(\sigma j) \tag{by definition} \\
			 &= z(\delta j) \tag{by $\delta=\sigma(i/k),\ j\ne i$} \\
			 &= z(k/u)(\delta(j)) \tag{by $k\ne\sigma(j) = \delta(j)$} \\
			 &= (\delta z(k/u))(j) \tag{by definition}
		\end{align*}
		
		\noindent For $j=i$ we have  $w(i/u)(i)= u$  and
		\begin{align*}
			\delta(z(k/u))(i) &= z(k/u)(\delta(i)) \tag{by definition} \\
			&= z(k/u)(k) = u \tag{by $\delta(i)=k$}
		\end{align*}
		
		\noindent 
		It follows that $w(i/u)\in h(x)$ by \eqref{w}, $z(k/u)\in g(\s_{\delta}x)$ and the definition of $h$. So $w\in \C_ih(x)$. \\
		
		Next we show \begin{align}
			C_ih(\c_ix)\subseteq h(\c_ix).
			\label{c}
		\end{align} To prove \eqref{c}, assume $w\in h(\c_ix)$ and $u\in V$, we show that $w(i/u)\in h(\c_ix)$. By $w\in h(\c_ix)$ and the definition of $h$, we have \eqref{*1}. We proceed by distinction of cases.
		
		Assume that $u\in \ran w$. If $w_i=u$ then $w=w(i/u)\in h(\c_ix)\subseteq C_ih(\c_ix)$ and we are done. Assume that $w_j=u$ for $j\ne i$. Let $\delta=\sigma(i/j)$. Then $w(i/u)=\delta z$ and $z\in g(\s_\sigma\c_ix)=g(\s_\delta\c_ix)$ by \eqref{*1} and (S2). So $w(i/u)\in h(\c_ix)$ by the definition of $h$.
		
		Assume now that $u\notin\ran w$  and $\sigma\in P$. Then $w(i/u)$ is repetition-free since $w=\sigma z$  and $z\in W$ by \eqref{*1}. Now, $w(i/u)\in C_ig(\c_ix)\cap W =  g(\c_i\c_ix) = g(\c_ix)\subseteq h(\c_ix)$ by \eqref{g2}, $\c_i\c_ix=\c_ix$ and \eqref{g4}.
		
		Assume now that $u\notin\ran(w)$ and $\sigma\notin P$. Assume first that $u\in\ran(z)$. Then $u=z_k$ for some $k\notin\ran \sigma$ by $u\notin\ran w$ and \eqref{*1}. Let $\delta=\sigma(i/k)$.  Then $w(i/u)=\delta z$ and $z\in g(\s_\sigma\c_ix) = g(\s_\delta\c_ix)$  by (S2), and so $w(i/u)\in h(\c_ix)$ by the definition of $h$. Assume next that $u\notin\ran z$. There is $k\notin\ran\sigma$ by $\sigma\notin P$. Let $\delta=\sigma(i/k)$. Then $w(i/u)=\delta z(k/u)$ and $z(k/u)\in W$.  So, $z(k/u)\in C_kg(\s_{\sigma}\c_ix)\cap W = g(\c_k\s_\sigma\c_ix) = g(\s_{\sigma}\c_ix) = g(\s_{\delta}\c_ix)$ by \eqref{*1}, \eqref{g2} and (S1), (S2). Thus, $w(i/u)\in h(\c_ix)$ by the definition of $h$ and we are done with showing \eqref{c}.
		\bigskip
		
		Now, $\C_ih(x)\subseteq h(\c_ix)$ follows easily from \eqref{c}: $C_ih(x)\subseteq C_ih(\c_ix) = h(\c_ix)$.	This completes the proof of Claim \ref{claim:2}.
	\end{proof}
	
	We show homomorphism of $h$ w.r.t.\ $\s_{ij}$ and $\p_{ij}$ for $i,j\in\alpha$ by showing homomorphism of $h$ w.r.t. $\s_{\eta}$ for all $\eta\in T$.
	
	\begin{claim}\label{claim:3}
		$h(\s_{\eta}x) = \S_{\eta}h(x)$ for all $\eta\in T$ and $x\in A$.
	\end{claim}
	\begin{proof}[of Claim \ref{claim:3}]
		First we show $h(\s_{\eta}x)\subseteq \S_{\eta}h(x)$. Assume that
		$w\in h(\s_{\eta}x)$. Then $w=z\circ\sigma$ for some 
		$z\in g(\s_{\sigma}\s_{\eta}x)$. Thus, 
		$w\circ\eta = z\circ\sigma\circ\eta$, and $z\in g(\s_{\sigma\circ\eta}x)$
		by (S5). So, $w\circ \eta\in h(x)$ by the definition of $h$, and so
		$w\in \S_{\eta}h(x)$ by the definition of $\S_{\eta}$. 
		
		For the other inclusion, it is enough to show 
		$\S_{\eta}h(x)\subseteq h(\s_{\eta}x)$ separately for $\eta = [i/j]$
		and $\eta = [i,j]$, for all $i\neq j<\alpha$. 
		
		Assume $w\in S_{ij}h(x)$, we want to show $w\in h(\s_{ij}x)$. We have $w
		\circ [i/j]\in h(x)$ by the definition of $S_{ij}$. Then
		$w\circ [i/j]=z\circ\sigma$ for some
		$z\in g(\s_{\sigma}x)$ and $\sigma\in T$, by the definition of $h$. We have
		$z\circ\sigma=w\circ [i/j] = w\circ[i/j]\circ[i/j]=z\circ\sigma\circ[i/j]$ and so
		$\sigma=\sigma\circ[i/j]$ by $[i/j]\circ[i/j] = [i/j]$ and
		$z$ being repetition-free. Thus, $w\circ [i/j]=z\circ \sigma$
		and $z\in g(\s_{\sigma\circ[i/j]}x) = g(\s_{\sigma}\s_{ij}x)$, 
		and so $w\circ[i/j]\in h(\s_{ij}x)$ by the definition of $h$. Now
		we use that $w$ and $w\circ[i/j]$ differ only at $i$ and we have already
		seen that $h$ is a homomorphism for $\c_i$. Therefore, by (S6) we get
		\[
		w\in \C_i\{ w\circ [i/j]\} \subseteq 
		\C_i h(\s_{ij}x) = h(\c_{i}\s_{ij}x)
		=h(\s_{ij}x).
		\]
		
		Similarly, $w\circ[i,j]\in h(x)$ implies that $w\circ[i,j]=z\circ\sigma$
		for some $z\in g(\s_{\sigma}x)$. Then $w=z\circ\sigma\circ[i,j]$
		and $z\in g(\s_{\sigma}\p_{ij}\p_{ij}x) =
		g(\s_{\sigma\circ[i,j]}\p_{ij}x)$ by $[i,j]\circ[i,j]$ being 
		the identity, (S5) and the definition of $\s_{\sigma}$. So, 
		$w\in h(\p_{ij}x)$ by the definition of $h$.
	\end{proof}
	
	Summing it up, $h$ provides a representation of $\gA$, and this completes
	the proof of Theorem \ref{thm:1a}.
	
	\end{proof}

Recall that $T,P$ and $S$ denote the sets of all finite transformations on $\alpha$, all permutational elements of $T$ and all non-permutational elements of $T$, respectively. Assume that
\[
\<A, \cdot, -, \c_{\Gamma}, \s_{\tau}:\; \tau\in T,\  \Gamma\text{ is a finite subset of }\alpha\>
\]
is a $\QPA_{\alpha}$. By its nonpermutational-substitutions free reduct we mean the algebra
\[
\<A, \cdot, -, \c_{\Gamma}, \s_{\tau}:\; \tau\in P,\ \Gamma\text{ is a finite subset of }\alpha\>.
\]
The following corollary is an immediately consequence of Theorem \ref{thm:1a} and the proof of \cite[Theorem 1(i)]{sain-thompson1991}.
\begin{corollary}\label{cor:1}
	A $\QPA_{\alpha}$ is representable iff its nonpermutational-substitutions free reduct is representable, for finite $\alpha$.
\end{corollary}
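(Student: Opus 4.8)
The plan is to deduce the corollary from Theorem~\ref{thm:1a} by transporting both the algebra and its reduct across the term-definitional equivalence between $\QPA_\alpha$ and $\FPA_\alpha$. We may assume $\alpha>2$; for $\alpha\le 2$ the statement is a small case to be handled directly (indeed, when $\alpha\le 1$ one has $S=\varnothing$, so the nonpermutational-substitutions free reduct is the whole algebra). So fix a $\QPA_\alpha$
\[
\mathfrak{Q}=\<A,\cdot,-,\c_\Gamma,\s_\tau:\; \tau\in T,\ \Gamma\text{ is a finite subset of }\alpha\>,
\]
and let $\mathfrak{A}=\<A,\cdot,-,\c_i,\s_{ij},\p_{ij}\>_{i,j<\alpha}\in\FPA_\alpha$ be the algebra term-definitionally equivalent to $\mathfrak{Q}$ by \cite[Theorem~1]{sain-thompson1991}. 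Under this equivalence $\c_i=\c_{\{i\}}$, $\s_{ij}=\s_{[i/j]}$ and $\p_{ij}=\s_{[i,j]}$, while in the reverse direction $\c_\Gamma=\c_{i_1}\cdots\c_{i_n}$ for $\Gamma=\{i_1,\dots,i_n\}$ and, by the defining equations recalled just before (S1)--(S6), $\s_\sigma$ is a composite of the $\s_{ij}$'s when $\sigma\in S$ and a composite of the $\p_{ij}$'s when $\sigma\in P$.

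The main step --- and the only point that is not pure bookkeeping --- is to observe that this equivalence restricts to the reducts in question \emph{without ever mentioning a non-permutational substitution}. Writing $\mathfrak{Q}^-=\<A,\cdot,-,\c_\Gamma,\s_\tau:\; \tau\in P,\ \Gamma\text{ finite}\>$ for the nonpermutational-substitutions free reduct, I would verify that $\Rd_{cp}\mathfrak{A}$ is a term-reduct of $\mathfrak{Q}^-$ (via $\c_i=\c_{\{i\}}$ and $\p_{ij}=\s_{[i,j]}$, noting $[i,j]\in P$) and, conversely, that $\mathfrak{Q}^-$ is a term-reduct of $\Rd_{cp}\mathfrak{A}$ --- because $\c_\Gamma=\c_{i_1}\cdots\c_{i_n}$ uses only single cylindrifications, and $\s_\sigma$ for $\sigma\in P$ is a composite of transpositions, hence of the $\p_{ij}$'s, so that no replacement-substitution $\s_{[k/l]}$ with $[k/l]\in S$ is needed. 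Thus $\mathfrak{Q}^-$ and $\Rd_{cp}\mathfrak{A}$ are term-definitionally equivalent, and --- since the same terms, interpreted in the full set algebras, identify $\<\Sb({}^\alpha U),\cap,-,\C_\Gamma^U,\S_\tau^U:\; \tau\in P,\ \Gamma\text{ finite}\>$ with $\<\Sb({}^\alpha U),\cap,-,\C_i^U,\P_{ij}^U\>_{i,j<\alpha}$ (using $\C_\Gamma^U=\C_{i_1}^U\cdots\C_{i_n}^U$, $\S_\sigma^U=\P_{i_1j_1}^U\cdots\P_{i_nj_n}^U$ for $\sigma\in P$, and $\P_{ij}^U=\S_{[i,j]}^U$) --- a representation of either reduct composes with these terms to give one of the other. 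The same reasoning, now also using the defining equation for $\s_\sigma$ with $\sigma\in S$ together with $\S_{ij}^U=\S_{[i/j]}^U$, shows that $\mathfrak{Q}$ is representable iff $\mathfrak{A}$ is representable.

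It then remains to chain the equivalences: $\mathfrak{Q}$ is representable $\iff$ $\mathfrak{A}$ is representable $\iff$ $\Rd_{cp}\mathfrak{A}$ is representable $\iff$ $\mathfrak{Q}^-$ is representable, the middle equivalence being precisely Theorem~\ref{thm:1a} applied to $\mathfrak{A}$ (whose substitution-free reduct is exactly $\Rd_{cp}\mathfrak{A}$). I expect no conceptual difficulty here; the part requiring care is the claim in the second paragraph that the terms recovering the permutation-substitutions $\s_\sigma$ ($\sigma\in P$) and the block cylindrifications $\c_\Gamma$ from the $cp$-type operations genuinely avoid all replacements --- which is supplied by the explicit term definitions in the proof of \cite[Theorem~1(i)]{sain-thompson1991} together with the elementary fact that every finite permutation of $\alpha$ is a composite of transpositions.
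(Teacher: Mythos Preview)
Your proposal is correct and follows essentially the same approach as the paper, which simply cites Theorem~\ref{thm:1a} together with the proof of \cite[Theorem~1(i)]{sain-thompson1991}; you just spell out in more detail the key observation that the term-definitional equivalence restricts to the reducts because every $\sigma\in P$ decomposes into transpositions (so recovering $\s_\sigma$ from the $cp$-operations never requires a replacement). The only loose end is the case $\alpha=2$, which you flag as ``a small case to be handled directly'' without actually handling it (here $S\ne\varnothing$ and the Sain--Thompson equivalence is stated for $\alpha>2$), but the paper does not address this explicitly either.
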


As another corollary to Theorem \ref{thm:1a}, we can fill in the statuses of two open lines in \cite[Figure 1]{AAPALII97ec}. We note that the status of the second line was already known, it is a result of J.\ D.\ Monk, see \cite{M99}.
	
		\begin{theorem}\label{thm:2} 
		Let $3\leq\alpha<\omega$.
		\begin{itemize}
			\item[(i)] $\RA_{\alpha}^{csp}$ is finitely axiomatized over $\RA_{\alpha}^{cp}$
			by the equations (F0-F9).
			\item[(ii)] $\RA_{\alpha}^{cp}$ is not finitely axiomatizable over $\RA_{\alpha}^{p}$.
		\end{itemize} 
	\end{theorem}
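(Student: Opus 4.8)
The plan is to deduce Theorem \ref{thm:2} directly from Theorem \ref{thm:1a} together with the known non-finite-axiomatizability results cited in the introduction.

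For part (i), I would argue as follows. The equations (F0-F9) are valid in $\RA_{\alpha}^{csp}$, since every full finitary polyadic set algebra satisfies them and the class is closed under $\mathbf{S}$ and $\mathbf{I}$. So it suffices to show that $\Eq(\RA_{\alpha}^{cp})\cup\{\text{(F0-F9)}\}$ axiomatizes $\Eq(\RA_{\alpha}^{csp})$, i.e.\ that every $csp$-type algebra $\gA$ satisfying (F0-F9) together with all the $cp$-equations valid in $\RA_{\alpha}^{cp}$ already lies in $\mathbf{SP}\,\RA_{\alpha}^{csp}$ (equivalently, $\gA$ is representable). Given such $\gA$, its $cp$-reduct $\Rd_{cp}\gA$ satisfies $\Eq(\RA_{\alpha}^{cp})$; since $\RA_{\alpha}^{cp}$ is a variety (it is $\mathbf{SP}$-closed, hence equationally definable — this is the standard fact that classes of representable set algebras of this kind form varieties, and in any case $\Eq(\RA_{\alpha}^{cp})$ defines a variety containing $\RA_{\alpha}^{cp}$ with the same equational theory), $\Rd_{cp}\gA$ belongs to that variety. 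The one remaining point is that $\Rd_{cp}\gA$ is actually \emph{representable}, i.e.\ lies in $\RA_{\alpha}^{cp}$ itself and not merely in the variety it generates; here one invokes that $\RA_{\alpha}^{cp}$ is closed under $\mathbf{HSP}$ — equivalently, that $\RA_{\alpha}^{cp}$ equals the variety $\Mod(\Eq(\RA_{\alpha}^{cp}))$. This is part of the standard theory (the representable polyadic-type algebras form varieties), so I would cite it. Once $\Rd_{cp}\gA\in\RA_{\alpha}^{cp}$ and $\gA\in\FPA_\alpha$ (which is exactly what (F0-F9) say), Theorem \ref{thm:1a} gives $\gA\in\RA_{\alpha}^{csp}$. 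Hence (F0-F9) axiomatize $\RA_{\alpha}^{csp}$ over $\RA_{\alpha}^{cp}$, and since there are finitely many of them, this is a finite axiomatization.

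For part (ii), suppose toward a contradiction that $\RA_{\alpha}^{cp}$ were finitely axiomatizable over $\RA_{\alpha}^{p}$, say by a finite set $\Sigma$ of $cp$-equations added to $\Eq(\RA_{\alpha}^{p})$. I would combine this with part (i): then $\RA_{\alpha}^{csp}$ would be finitely axiomatizable over $\RA_{\alpha}^{p}$ (add the finitely many equations (F0-F9) to $\Sigma$, noting $\Eq(\RA_{\alpha}^{p})\subseteq\Eq(\RA_{\alpha}^{cp})\subseteq\Eq(\RA_{\alpha}^{csp})$ and that (F0-F9)$\cup\,\Sigma$ together with $\Eq(\RA_{\alpha}^{p})$ axiomatizes $\Eq(\RA_{\alpha}^{csp})$). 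Since $\RA_{\alpha}^{csp}$ (equivalently $\QPA_\alpha$) is term-definitionally equivalent to the representable finitary polyadic equality algebras, this would make the equational theory of the representable polyadic (equality) algebras finitely axiomatizable relative to the purely Boolean-plus-permutations theory $\Eq(\RA_{\alpha}^{p})$, which is itself finitely based (the permutation fragment is well understood — for instance the relevant equations are essentially (F0), (F6)--(F9) restricted to $\p$, giving a finite axiomatization of $\RA_{\alpha}^{p}$). Concatenating, $\RA_{\alpha}^{csp}$ would be finitely axiomatizable outright, contradicting Johnson's theorem \cite{J69} (quoted above via \cite{sain-thompson1991} as well) that the representable polyadic algebras of finite dimension $\ge 3$ are not finitely axiomatizable. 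Alternatively, and more in keeping with the framing in the introduction, one can phrase the contradiction as: finite axiomatizability in (ii) would say the cylindrifications $\c_i$ are finitely axiomatizable over the Booleans and permutations, which is exactly the statement attributed to Monk \cite{M99}; but independently of \cite{M99}, the reduction above to Johnson's result gives a self-contained contradiction from the premise of (ii).

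The main obstacle is the subtlety in part (i) between ``satisfies $\Eq(\RA_{\alpha}^{cp})$'' and ``is representable, i.e.\ in $\RA_{\alpha}^{cp}$'': this step needs that $\RA_{\alpha}^{cp}$ is a variety (closed under homomorphic images in addition to the obvious $\mathbf{SP}$-closure built into its definition). For part (ii) the only delicate point is to make sure all the relative axiomatizations compose correctly — that finite axiomatizability of $L$ over $M$ and of $M$ over $N$ yields finite axiomatizability of $L$ over $N$ when $N$ is itself finitely based — and that the term-definitional equivalence $\QPA_\alpha\equiv\FPA_\alpha$ of \cite{sain-thompson1991} transports (non-)finite axiomatizability; both are routine once stated carefully.
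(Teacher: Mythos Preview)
Your argument is correct and follows the same line as the paper's: (i) is read off from Theorem~\ref{thm:1a}, and (ii) is obtained by contradiction with Johnson's non-finite-axiomatizability theorem via (i). You are in fact more careful than the paper on two points it leaves implicit---that $\RA_{\alpha}^{cp}$ is a variety (so that satisfying $\Eq(\RA_{\alpha}^{cp})$ gives representability of the reduct) and that $\RA_{\alpha}^{p}$ is itself finitely based (so that relative finite axiomatizability over it yields absolute finite axiomatizability). One minor slip: there are no diagonals in this setting, so ``polyadic \emph{equality} algebras'' should simply be ``polyadic algebras'', and $\RA_{\alpha}^{csp}$ is the \emph{representable} part of $\FPA_\alpha$, not $\QPA_\alpha$ itself.
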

	
		\begin{proof}[of Theorem \ref{thm:2}] Theorem \ref{thm:2}(i) is an immediate consequence of Theorem \ref{thm:1a}.  For the proof of (ii), note that Johnson \cite{J69} proved that the class of representable polyadic algebras is not finitely axiomatizable. I.e., the set of equations true in $\RA_{\alpha}^{csp}$ do not have a finite equational axiom set. Now, assume that the equational theory of $\RA_{\alpha}^{cp}$ has a finite equational axiom set $\Sigma$.  Then, $\Sigma$ together with all instances of (F0-F9) would be a finite  axiom set for $\RA_{\alpha}^{csp}$, by our Theorem \ref{thm:1a}. This contradicts Johnson's result.
			\end{proof}

\section{Ties to logic}
\def\Fm{\mathfrak{Fm}}

In this section we discuss the logical meaning of Theorem \ref{thm:1a} and its corollaries. 
Assume that $\alpha$ is a finite ordinal. Finite variable first-order logic has many versions depending on the number of variables, shape of atomic formulas and the logical connectives we use (see, e.g. \cite{AndrekaEtAl2022}). Here we consider the equality free, $\alpha$-variable first-order logic $L_{\alpha}$. It has infinitely many $\alpha$-place relation symbols $R_i$ for $i<\omega$, with the set of variables $V=\{v_0,\ldots,v_{\alpha-1}\}$, and the set of atomic formulas is
\[
	\big\{ R_k(v_{i_1}, \ldots, v_{i_{\alpha}}):\; k<\omega,\  i_1,\ldots,i_{\alpha}<\alpha  \big\},
\]
where inside the relation symbols we allow any order of the variables.
Formulas are built up from atomic formulas in the usual manner by using the logical connectives $\lnot$, $\exists v_i$, and $\land$. The derived connectives, such as $\lor$, $\to$, $\forall v_i$ are as usual.
The set of formulas is the universe of the formula algebra $\Fm^{c}$, where, for traditional reasons, we denote the operations defined by the connectives as $\cdot, -, \c_i$. 

Let $\p_{ij}$ denote the function on $\Fm^{c}$ that interchanges the variables $v_i$ and $v_j$ in each formula, and let $\Fm^{cp}$ denote the algebra $\Fm^{c}$ endowed with the functions $\p_{ij}$ for $i,j<\alpha$. 
Notice that the $\p_{ij}$'s are not connectives. 
Let $\equiv$ denote the tautological congruence of $\Fm^{c}$, that is, 
$\phi\equiv\psi$ iff the formula $\phi\leftrightarrow\psi$ is valid. It is not hard to see that $\equiv$ is a congruence relation with respect to the transposition functions $\p_{ij}$ as well, i.e. $\phi\equiv\psi$ implies that $\p_{ij}(\phi)\equiv\p_{ij}(\psi)$.\footnote{See \cite{AndrekaEtAl2022}, and \cite[Sec.4.3]{HenkinMonkTarski1971}.}

\def\Ps{\mathsf{Ps}}
\def\Dfs{\mathsf{Dfs}}

Consider the following classes of set algebras.
\begin{align*}
	\Dfs_{\alpha} &= \mathbf{SI}\big\{ \< \Sb({}^{\alpha}U), \cap, -, \C_i^U\>_{i<\alpha}:\; U\text{ is a set} \big\},\\ 
	\Ps_{\alpha} &= \mathbf{SI}\big\{ \< \Sb({}^{\alpha}U), \cap, -, \C_i^U, \P_{ij}^U\>_{i,j<\alpha}:\; U\text{ is a set} \big\} = \RA_{\alpha}^{cp}.
\end{align*}
$\Ps_{\alpha}$ is called the class of \emph{polyadic set algebras of dimension $\alpha$}, while $\Dfs_{\alpha}$ is its transposition-free subreduct, called \emph{diagonal-free cylindric set algebras}.
The following statement is known in algebraic logic \cite{AndrekaEtAl2022}, \cite[Sec.4.3]{HenkinMonkTarski1971}.

\begin{proposition}
	An equation is true in $\Fm^{c}/\!{}_\equiv$ if and only if it is true in $\Dfs_{\alpha}$.
\end{proposition}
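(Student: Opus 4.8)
The plan is to realize the Lindenbaum--Tarski algebra $\Fm^{c}/\!{}_\equiv$ inside the diagonal-free cylindric set algebras and, conversely, each such set algebra (where we like) inside the image of the formula algebra, both times through the \emph{meaning function}. For a structure $\mathcal{M}$ interpreting, on its universe $M$, each $\alpha$-place relation symbol $R_k$ as a subset $R_k^{\mathcal{M}}\subseteq{}^{\alpha}M$, one sets $\mng_{\mathcal{M}}(\phi)=\{s\in{}^{\alpha}M:\mathcal{M}\models\phi[s]\}$. An induction on the build-up of formulas, using that $\land,\lnot,\exists v_i$ are interpreted in $\mathcal{M}$ exactly the way $\cap,-,\C_i^{M}$ act on $\Sb({}^{\alpha}M)$, shows that $\mng_{\mathcal{M}}$ is a homomorphism from $\Fm^{c}$ into $\langle\Sb({}^{\alpha}M),\cap,-,\C_i^{M}\rangle_{i<\alpha}$; its image, being a subalgebra of a full diagonal-free cylindric set algebra, belongs to $\Dfs_{\alpha}$.

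For the ``if'' direction, suppose an equation $p(x_1,\dots,x_n)=q(x_1,\dots,x_n)$ holds throughout $\Dfs_{\alpha}$, and take congruence classes $[\phi_1],\dots,[\phi_n]$ of formulas. For every structure $\mathcal{M}$, since $\mng_{\mathcal{M}}$ is a homomorphism onto a member of $\Dfs_{\alpha}$ in which $p=q$ holds, we get $\mng_{\mathcal{M}}(p(\phi_1,\dots,\phi_n))=\mng_{\mathcal{M}}(q(\phi_1,\dots,\phi_n))$. As $\mathcal{M}$ was arbitrary, $p(\phi_1,\dots,\phi_n)\leftrightarrow q(\phi_1,\dots,\phi_n)$ is valid, i.e.\ $p(\phi_1,\dots,\phi_n)\equiv q(\phi_1,\dots,\phi_n)$; hence the equation is true in $\Fm^{c}/\!{}_\equiv$.

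For the ``only if'' direction I would argue by contraposition. Suppose $p=q$ fails in some $\gA\in\Dfs_{\alpha}$; then $\gA$ is a subalgebra of a full algebra $\langle\Sb({}^{\alpha}U),\cap,-,\C_i^{U}\rangle_{i<\alpha}$ and there are $a_1,\dots,a_n\in A$ with $p^{\gA}(a_1,\dots,a_n)\neq q^{\gA}(a_1,\dots,a_n)$. Build the structure $\mathcal{M}$ on the universe $U$ by putting $R_k^{\mathcal{M}}=a_k$ for $1\le k\le n$ (and, say, $R_k^{\mathcal{M}}=\emptyset$ otherwise), and set $\phi_k\defeq R_k(v_0,v_1,\dots,v_{\alpha-1})$, so that $\mng_{\mathcal{M}}(\phi_k)=a_k$. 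Since $\mng_{\mathcal{M}}$ is a homomorphism and $\gA$ is a subalgebra of the full algebra, $\mng_{\mathcal{M}}(p(\phi_1,\dots,\phi_n))=p^{\gA}(a_1,\dots,a_n)\neq q^{\gA}(a_1,\dots,a_n)=\mng_{\mathcal{M}}(q(\phi_1,\dots,\phi_n))$. Thus $p(\phi_1,\dots,\phi_n)\leftrightarrow q(\phi_1,\dots,\phi_n)$ fails in $\mathcal{M}$, so it is not valid, i.e.\ $p(\phi_1,\dots,\phi_n)\not\equiv q(\phi_1,\dots,\phi_n)$, and the equation fails in $\Fm^{c}/\!{}_\equiv$.

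There is no deep obstacle here; the only step needing slight care is the realization move in the ``only if'' part, where each prescribed element $a_k$ of the set algebra must be displayed as the meaning of an honest formula. This is possible precisely because $L_{\alpha}$ has infinitely many (we use only the $n$ we need) $\alpha$-place relation symbols and because the atomic formula $R_k(v_0,\dots,v_{\alpha-1})$, with variables in the natural order and without repetition, has meaning exactly $R_k^{\mathcal{M}}$. The remaining ingredients — that $\mng_{\mathcal{M}}$ is a homomorphism and that subalgebras of full diagonal-free cylindric set algebras lie in $\Dfs_{\alpha}$ — are standard.
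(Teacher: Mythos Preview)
Your argument is correct and is precisely the standard Lindenbaum--Tarski/meaning-function argument. The paper itself does not prove this proposition; it merely records it as known and cites \cite{AndrekaEtAl2022} and \cite[Sec.~4.3]{HenkinMonkTarski1971}, where the proof proceeds exactly as you outline. One cosmetic point: since $\Dfs_{\alpha}=\mathbf{SI}\{\ldots\}$, an $\gA\in\Dfs_{\alpha}$ is only \emph{isomorphic} to a subalgebra of a full set algebra, so in the ``only if'' direction you should first transport the witnesses $a_1,\dots,a_n$ along that isomorphism before building $\mathcal{M}$; this changes nothing of substance.
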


The analogous statement for $\Fm^{cp}/\!{}_\equiv$ also holds, but as
$\Fm^{cp}/\!{}_\equiv$ is not a Lindenbaum--Tarski algebra (the operations do not come from connectives), we state it separately.

\begin{proposition}
	An equation is true in $\Fm^{cp}/\!{}_\equiv$ if and only if it is true in $\Ps_{\alpha}$.
\end{proposition}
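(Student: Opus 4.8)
The plan is to reduce this statement to the previous proposition via the transposition operations. The forward direction is the one that carries content: I would take an equation $\epsilon$ in the $cp$-similarity type that fails in $\Ps_{\alpha}=\RA_\alpha^{cp}$ and produce a valuation into some $\Fm^{cp}/\!{}_\equiv$ witnessing failure there. By the earlier Proposition for $\Fm^{c}/\!{}_\equiv$ and $\Dfs_\alpha$, and the fact that $\Fm^{c}/\!{}_\equiv$ is the Lindenbaum--Tarski algebra of equality-free $\alpha$-variable logic, the Boolean-plus-cylindrification fragment is already understood; the task is to see that adjoining the transposition functions $\p_{ij}$ does not change which equations hold. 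So the real content is: the tautological congruence $\equiv$ on $\Fm^{c}$ is also a congruence for the $\p_{ij}$ — which the excerpt already asserts in the footnoted remark before the display of the two $\mathsf{Ps}$/$\mathsf{Dfs}$ classes — and, dually, that $\Fm^{cp}/\!{}_\equiv$ embeds into (a product of, or a single) $\P_{ij}$-expanded set algebra(s).

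Concretely I would argue both inclusions of equational theories. For $\mathrm{Eq}(\Ps_\alpha)\subseteq\mathrm{Eq}(\Fm^{cp}/\!{}_\equiv)$: each congruence class $[\phi]_\equiv$ maps to its meaning set $\mathrm{mng}_{\mathfrak M}(\phi)=\{s\in{}^\alpha M : \mathfrak M\models\phi[s]\}$ in the model $\mathfrak M$; this assignment is well-defined on $\equiv$-classes, respects $\cap,-,\C_i$ by the usual semantic clauses, and respects $\p_{ij}$ because interchanging the variables $v_i,v_j$ syntactically corresponds exactly to the operation $\P_{ij}$ on meaning sets ($s$ satisfies $\p_{ij}\phi$ iff $s\circ[i,j]$ satisfies $\phi$, i.e.\ iff $s(i/s_j)(j/s_i)\in\mathrm{mng}(\phi)$). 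Ranging over all models $\mathfrak M$ whose meaning sets separate nonequivalent formulas, one gets an embedding of $\Fm^{cp}/\!{}_\equiv$ into a product of members of $\Ps_\alpha$, hence every equation true in $\Ps_\alpha$ is true in $\Fm^{cp}/\!{}_\equiv$. For the reverse inclusion $\mathrm{Eq}(\Fm^{cp}/\!{}_\equiv)\subseteq\mathrm{Eq}(\Ps_\alpha)$: given a full polyadic set algebra $\langle\Sb({}^\alpha U),\cap,-,\C_i^U,\P_{ij}^U\rangle$, introduce one $\alpha$-place relation symbol for each element of a generating set (or use the standard fact that every such set algebra is an image of a formula algebra under a meaning-function homomorphism), so that any equation failing in the set algebra already fails under a suitable assignment in some $\Fm^{cp}/\!{}_\equiv$; since equations valid in $\Fm^{cp}/\!{}_\equiv$ are preserved under the $\mathbf{S},\mathbf I$ operations defining $\Ps_\alpha$, the inclusion follows.

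The one step that needs genuine care — and hence the main obstacle — is verifying that $\equiv$ is a congruence for the $\p_{ij}$, i.e.\ that $\phi\equiv\psi$ implies $\p_{ij}\phi\equiv\p_{ij}\psi$, together with its semantic counterpart that $\mathrm{mng}(\p_{ij}\phi)=\P_{ij}^U(\mathrm{mng}(\phi))$. This is where the substitution of variables is actually being analyzed: one must check that the syntactic swap of $v_i$ and $v_j$ commutes correctly with $\exists v_k$ for $k\notin\{i,j\}$ and behaves as expected under nesting, which is exactly the combinatorics of the axioms (F6--F9). The excerpt defers this to \cite{AndrekaEtAl2022} and \cite[Sec.4.3]{HenkinMonkTarski1971}, so in the write-up I would simply cite those and spell out the one-line semantic identity for $\P_{ij}$, after which both inclusions of equational theories are routine applications of the soundness/completeness bridge between formula algebras and set algebras. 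Everything else — the behavior of $\cap,-,\C_i$ — is identical to the equality-free cylindric case already covered by the preceding Proposition, so no new work is needed there.
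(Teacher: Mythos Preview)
Your proposal is correct and follows the standard soundness/completeness bridge between Lindenbaum--Tarski style algebras and set algebras: meaning maps $\mathrm{mng}_{\mathfrak M}$ give homomorphisms $\Fm^{cp}/{\equiv}\to\Ps_\alpha$ (once one checks $\mathrm{mng}(\p_{ij}\phi)=\P_{ij}\mathrm{mng}(\phi)$), their product is an embedding, and conversely any counterexample $a_1,\dots,a_n$ in a full set algebra is realised by interpreting $R_1,\dots,R_n$ as $a_1,\dots,a_n$ and evaluating the equation at $[R_k(v_0,\dots,v_{\alpha-1})]_\equiv$. The paper itself does not spell any of this out: its entire proof is a pointer to \cite[Proposition~1]{ANTnew}, noting that the equality atoms $v_i=v_j$ are absent here; your sketch is exactly the argument that reference carries out, so there is no divergence in method, only in level of detail. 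One cosmetic slip: you write ``some $\Fm^{cp}/{\equiv}$'' as though there were several, but there is a single formula algebra with countably many relation symbols, which suffices because every equation has only finitely many variables.
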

\begin{proof}
	See Proposition 1 in \cite{ANTnew} with the modification that we do not have to deal with the equality formulas $v_i=v_j$. 
\end{proof}

Substitution of variables $\s_{ij}$ can further be added to $\Fm^{cp}$ as unary operations. However, one should be careful. If one takes the operation that replaces all occurrences of the variable $v_i$ with $v_j$ in a formula, then $\equiv$ no longer remains a congruence of this expanded algebra. For example, for $i\ne j$ we have
\begin{align*}
	&R(v_i)\equiv \exists v_jR(v_i), 
	\quad\text{ while }\\
	& R(v_j)\not\equiv \exists v_jR(v_j).
\end{align*}
This could happen because we replaced a free occurrence of $v_i$ with a bound occurrence of $v_j$ in $\exists v_jR(v_i)$, such substitutions are called \emph{non-admissible}. This is a well-known problem in first-order logic and the solution is that we choose a variable $v_k$ that does not occur in the formula and we change all bound occurrences of $v_j$ to $v_k$. This makes replacing $v_i$ with $v_j$ admissible while it does not affect the meaning of the formula. This kind of substitution, i.e., replacing together with renaming bound variables, is already ``semantic" in the sense that $\equiv$ is a congruence to it.
In ordinary first-order logic where we have infinitely many variables, this always can be done, but in finite-variable logic for a formula we may not find a new variable $v_k$. 

A semantics-preserving substitution for finite variable logic is introduced in \cite[p.354]{M71}. For our case of  replacing $v_i$ with $v_j$, Monk's definition amounts to the following. We replace free occurrences of $v_i$ to $v_j$ and at the same time we replace bound occurrences of $v_j$ to $v_i$. In more detail:
Let $\varphi$ be a formula. We get $[v_i/v_j]\varphi$ as follows. We go through the formula from left to right. If we encounter a free occurrrence of $v_i$ then we change it to $v_j$. If we encounter a bound occurrence of $v_j$ then we change it to $v_i$. It is proved in \cite[Lemma 1]{M71} that in each model and evaluation $s$ of variables in that model, $s$ satisfies $[v_i/v_j]\varphi$  if and only if $s(i/s_j)$ satisfies $\varphi$ in that model.

Let thus $\s_{ij}$ be the function on $\Fm^{c}$ that assigns $[v_i/v_j]\varphi$ to a formula $\varphi$.
Let $\Fm^{csp}$ denote the algebra expanded with these substitution operations $\s_{ij}$ for $i,j<\alpha$. Then $\equiv$ is a congruence relation on this algebra.
The class of algebras corresponding to $\Fm^{csp}/\!{}_\equiv$ is
$\RA_{\alpha}^{csp}$:
\begin{proposition}
	An equation is true in $\Fm^{csp}/\!{}_\equiv$ if and only if it is true in $\RA_{\alpha}^{csp}$.
\end{proposition}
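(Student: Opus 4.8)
The plan is to argue exactly as in the proof of the preceding Proposition (the $\Fm^{cp}/\!{}_\equiv$ case, proved in \cite{ANTnew}), adding the bookkeeping needed to handle the new unary operations $\s_{ij}$. The core fact driving all three Propositions of this kind is the standard meaning-function construction: to each formula $\varphi$ and each model $\mathfrak{M}$ with universe $U$ one associates its meaning $\mng_{\mathfrak{M}}(\varphi) = \{ s\in{}^{\alpha}U : \mathfrak{M}\models\varphi[s]\}\in\Sb({}^{\alpha}U)$, and one checks that $\mng_{\mathfrak{M}}$ is a homomorphism from $\Fm^{csp}$ onto a subalgebra of $\langle\Sb({}^{\alpha}U),\cap,-,\C_i^U,\S_{ij}^U,\P_{ij}^U\rangle_{i,j<\alpha}$. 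For the Boolean connectives and $\exists v_i$ this is the Tarskian truth definition; for $\p_{ij}$ it is the observation already recorded in the excerpt that transposing $v_i,v_j$ in $\varphi$ yields the relation $\P_{ij}^U$ applied to the meaning of $\varphi$; and for $\s_{ij}$ it is precisely \cite[Lemma~1]{M71}, quoted above, which says that $s$ satisfies $[v_i/v_j]\varphi$ iff $s(i/s_j)$ satisfies $\varphi$, i.e.\ $\mng_{\mathfrak{M}}(\s_{ij}\varphi)=\S_{ij}^U(\mng_{\mathfrak{M}}(\varphi))$.

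From here the two directions are routine. For the ``only if'' direction, suppose an equation $\tau_1 = \tau_2$ (in variables ranging over $\Fm^{csp}/\!{}_\equiv$) is valid in $\Fm^{csp}/\!{}_\equiv$. Given any generator $\langle\Sb({}^{\alpha}U),\cap,-,\C_i^U,\S_{ij}^U,\P_{ij}^U\rangle_{i,j<\alpha}$ of $\RA_\alpha^{csp}$ and any assignment of its elements to the variables, realize each assigned relation as $\mng_{\mathfrak{M}}(\varphi)$ for a suitable formula $\varphi$ and model $\mathfrak{M}$ (this is where one uses that there are infinitely many $\alpha$-ary relation symbols $R_k$ and that one may take $\mathfrak{M}$ to interpret finitely many distinct $R_k$'s by the finitely many relations named; a product/disjoint-sum argument reduces the case of several models to a single one, exactly as in \cite{ANTnew}). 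Since $\mng_{\mathfrak{M}}$ is a homomorphism and the equation holds in $\Fm^{csp}/\!{}_\equiv$, it holds at this assignment; hence it holds in every full algebra, hence in $\S\mathbf{I}$ of them, i.e.\ in $\RA_\alpha^{csp}$. The ``if'' direction is the easy one: each $\Fm^{csp}/\!{}_\equiv$ embeds (via $\varphi/\!\equiv \;\mapsto \mng_{\mathfrak{M}}(\varphi)$, ranging $\mathfrak{M}$ over all models and taking the product) into a product of full finitary polyadic set algebras, so $\Fm^{csp}/\!{}_\equiv\in\mathbf{SIP}$ of the generators and every equation of $\RA_\alpha^{csp}$ descends to it.

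The only genuinely nontrivial input is the semantic correctness of Monk's substitution operation, and that is supplied verbatim by \cite[Lemma~1]{M71}; everything else is the standard correspondence between Lindenbaum–Tarski-type formula algebras and the corresponding classes of set algebras, already invoked for $\Dfs_\alpha$ and $\Ps_\alpha$ above. Accordingly I would write the proof as: ``The proof is identical to that of the previous Proposition (see Proposition~1 in \cite{ANTnew}), replacing $\Fm^{cp}$ by $\Fm^{csp}$ and $\Ps_\alpha$ by $\RA_\alpha^{csp}$; the extra verification that $\mng_{\mathfrak{M}}$ commutes with $\s_{ij}$ is exactly \cite[Lemma~1]{M71}, and the well-definedness of $\s_{ij}$ on $\Fm^{csp}/\!{}_\equiv$ was noted above.'' The expected main obstacle is purely expository — making sure the realize-every-relation-as-a-meaning step is stated cleanly for the $csp$-type — rather than any mathematical difficulty.
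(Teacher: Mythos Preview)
Your proposal is correct and matches the paper's own proof almost verbatim: the paper simply writes ``The proof is analogous to that of \cite[Proposition 1]{ANTnew}, except that we do not have to deal with the equality formulas $v_i=v_j$.'' The only nuance you do not make explicit is that \cite{ANTnew} treats the \emph{equality} case, so the adaptation here consists in dropping the diagonals rather than adding anything; your invocation of \cite[Lemma~1]{M71} for the $\s_{ij}$ clause is exactly the right additional ingredient.
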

\begin{proof}
	The proof is analogous to that of \cite[Proposition 1]{ANTnew}, except that we do not have to deal with the equality formulas $v_i=v_j$.
\end{proof}

\end{document}